\documentclass[preprint]{elsarticle}
\usepackage{amsmath}
\usepackage{amssymb,amsfonts}
\usepackage{amsthm}
\usepackage{tikz}
\usepackage{fullpage}
\usepackage{graphicx}
\usepackage{todonotes}
\usepackage[bookmarks,colorlinks,breaklinks]{hyperref}  
\hypersetup{linkcolor=blue,citecolor=blue,filecolor=blue,urlcolor=blue} 
\numberwithin{equation}{section}
\newcommand{\lref}[2][]{\hyperref[#2]{#1~\ref*{#2}}}
\renewcommand{\eqref}[2][]{\hyperref[#2]{(\ref*{#2})}}
\setlength{\oddsidemargin}{.25in}
\setlength{\evensidemargin}{.25in}
\setlength{\textwidth}{6in}
\setlength{\topmargin}{-0.4in}
\setlength{\textheight}{8.5in}
\usepackage{algorithm}
\usepackage{algorithmic} 


\newtheorem{theorem}{Theorem}

\newtheorem{lemma}[theorem]{Lemma}

\theoremstyle{definition}

\newtheorem{reduction}{Reduction Rule}

\def\beginrefs{\begin{list}%
		{[\arabic{equation}]}{\usecounter{equation}
			\setlength{\leftmargin}{2.0truecm}\setlength{\labelsep}{0.4truecm}%
			\setlength{\labelwidth}{1.6truecm}}}
	\def\endrefs{\end{list}}


\newcommand{\size}[1]{\left| #1 \right|}

\newcommand{\cop}[1]{$\mathcal{C}_{#1}$}
\newcommand{\robber}{$\mathcal{R}$}


\begin{document}

\title{A bound for the cops and robber problem in terms of 2-component order connectivity}

\author{Suryaansh Jain}
\ead{cs21btech11057@iith.ac.in}
\author{Subrahmanyam Kalyanasundaram}
\ead{subruk@cse.iith.ac.in}

\author{Kartheek Sriram Tammana}
\ead{cs21btech11028@iith.ac.in}

\affiliation
{organization={Department of Computer Science and Engineering},
            addressline={IIT Hyderabad}, 
            city={Hyderabad},
            postcode={502285}, 
            country={India}}

\begin{abstract}
    In the cops and robber game, there are multiple cops and a single robber taking turns moving along the edges of a graph. The goal of the cops is to capture the robber (move to the same vertex as the robber) and the goal of the robber is to avoid capture. 
    The \emph{cop number} of a given graph is the smallest number of cops 
    required to ensure the capture of the robber. 
    The $\ell$-\emph{component order connectivity} of a graph $G = (V, E)$ is the size of a smallest set $U$, such that all the connected components of the induced graph on $V\setminus U$ are of size at most $\ell$. In this brief note, we provide a bound on the cop number of graphs in terms of their 2-component order connectivity. 
\end{abstract}
\maketitle

\section{Introduction}


The game of \emph{cops and robber} was introduced in 1984 by Aigner and Fromme \cite{aignerfromme}. The game is played on an undirected graph. There could be multiple cops and one robber. 
The game starts out with the cops placing themselves on the vertices of the graph, followed by the robber. The cops and the robber now take turns to move, starting with the cops. A move is defined as going to any neighbour, or staying put at the current vertex. The goal of the cops, who play as a team, is to \emph{capture} the robber, which means to move to the same vertex as the robber.
The goal of the robber is to evade capture. The game ends when the robber is caught. The cop number of a graph $G$, denoted by $c(G)$, is the minimum number of cops required such that the robber cannot evade the cops indefinitely.

The cops and robber game has been studied extensively, see for instance \cite{scott-sudakov}. The Meyniel's conjecture is a famous open question that states that for any graph $G$, the cop number $c(G) = O(\sqrt{n})$ \cite{meynielsurvey}. The problem has been studied from different perspectives: on various classes of  graphs \cite{P5-free, forbidden}, characterizations \cite{k-copwin}, and variants of the original problem \cite{ride-hide, surround}.

Recently, the problem has been investigated from the perspective of parameterized algorithms, and there are bounds on the cop number with respect to structural parameters of the graph such as vertex cover number \cite{gahlawat} and genus of the graph \cite{genusbound}.
The paper \cite{gahlawat} bounds the cop number as $c(G) \leq \frac{\mbox{vcn}(G)}{3} + 1$, where  $\mbox{vcn}(G)$ is the vertex cover number, the size of a smallest vertex cover of $G$.

In this note, we consider a generalization of vertex cover, called \emph{component order connectivity} \cite{grosssurveycoc}. The $\ell$-component order connectivity (or $\ell$-coc) of a graph $G = (V, E)$ is the size of a smallest set $U$, such that all the connected components of the induced graph on $V\setminus U$ are of size at most $\ell$. By setting $\ell =1$, we get the vertex cover number of $G$. We state our result below.

\begin{theorem}\label{thm:main}
    For an undirected graph $G$, the cop number $c(G) \leq \frac{\mbox{2-coc}(G)}{3} + 4$.
\end{theorem}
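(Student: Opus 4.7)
The plan is to reduce the bound to the vertex cover case of Gahlawat et al.\ via a graph contraction. Let $U$ be a minimum 2-coc set of $G$ with $|U| = k$, and let the components of $G \setminus U$ be singletons $w_1, \ldots, w_p$ and pairs $(u_1, v_1), \ldots, (u_q, v_q)$, where each $u_i v_i \in E(G)$.

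First, I would construct an auxiliary graph $G'$ from $G$ by contracting each pair $(u_i, v_i)$ into a single vertex $x_i$. Since $G$ has no edges among singletons, among vertices of distinct pairs, or between singletons and pairs (any such edge would enlarge a component of $G \setminus U$), the set $V(G') \setminus U = \{w_1, \ldots, w_p, x_1, \ldots, x_q\}$ is independent in $G'$. Consequently $U$ is a vertex cover of $G'$ of size $k$, and the result of Gahlawat et al.\ applied to $G'$ yields $c(G') \leq k/3 + 1$.

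The main technical step is then to show $c(G) \leq c(G') + 3$. I would prove this via a shadow-strategy simulation: employ $c(G')$ \emph{shadow} cops mirroring an optimal $G'$-strategy (cops at vertices of $U \cup \{w_j\}$ are placed at the same vertex in $G$, while a cop at a contracted vertex $x_i$ is placed at $u_i$ or $v_i$, chosen to be consistent with its next $G'$-move), supplemented by 3 \emph{auxiliary} cops that resolve pair-related complications. The auxiliaries address two issues: (a) the side-of-pair ambiguity—when a shadow cop's $G'$-move transitions between $x_i$ and a $U$-neighbor but the shadow sits on the wrong endpoint of the pair, an auxiliary cop preempts the lag by covering one endpoint while the shadow traverses the pair edge; and (b) the endgame—when the $G'$-strategy confines the robber to some $x_i$, auxiliary cops converge on the pair $(u_i, v_i)$ to seal off escape and execute the final capture in $G$.

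The main obstacle is rigorously establishing the shadow reduction $c(G) \leq c(G') + 3$ and proving that 3 auxiliary cops are enough \emph{uniformly}, independent of the number of pairs $q$. The key idea I would exploit is that at any moment of the simulated game, only one ``active'' pair requires auxiliary intervention---the pair the robber currently inhabits, or the pair a shadow cop is currently crossing---so a constant-sized auxiliary team always suffices. Combining the two steps then gives $c(G) \leq c(G') + 3 \leq (k/3 + 1) + 3 = k/3 + 4$, as desired.
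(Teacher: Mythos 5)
Your setup is fine as far as it goes: contracting each pair-component of $G \setminus U$ does produce a graph $G'$ in which $U$ is a vertex cover, so $c(G') \leq |U|/3 + 1$ follows from Gahlawat et al. The gap is exactly in the step you yourself flag as the main obstacle, $c(G) \leq c(G') + 3$, and your sketch does not close it. First, the claim that at any moment only one pair is ``active'' is not true: in a single round of the $G'$-strategy all cops move simultaneously, so several shadow cops can be mid-crossing on \emph{distinct} pairs in the same round, and a fixed team of 3 auxiliaries does not scale with that. Second, and more fundamentally, even a single lag breaks the simulation. A cop sitting at a contracted vertex $x_i$ chooses its next $G'$-move only after seeing the robber's move, but its shadow must commit to $u_i$ or $v_i$ \emph{before} that; if the strategy then sends the cop to a $U$-neighbour of $v_i$ that is not adjacent to $u_i$, the shadow needs two real moves to execute one simulated move. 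During the extra round the robber moves once more, its projection in $G'$ advances, and the invariant ``the shadows project onto the $G'$-strategy's positions against the projected robber'' is destroyed. An auxiliary ``covering one endpoint'' of the pair does not restore it, because the $G'$-strategy needs that cop at a vertex of $U$, not on the pair. To make this work you would have to formulate and prove an invariant showing the auxiliaries absorb all such desynchronizations uniformly in the number of pairs $q$, against a black-box $G'$-strategy; as written this is asserted rather than proved, and I do not see how to carry it out.

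For contrast, the paper avoids contraction and simulation entirely. It first applies reduction rules, each spending one cop to delete at least 3 vertices of $U$, which in particular forces every remaining component to have small diameter; it then uses 3 cops not to mirror a strategy on a minor but to forbid the robber from ever traversing an edge with both endpoints outside $U$. Once that restriction is enforced, $U$ (restricted to the robber's component) is literally a vertex cover of the robber's effective territory, and the Gahlawat et al.\ bound is applied to the actual graph rather than to a contraction. Note that even the paper's ``3 extra cops'' step leans on the diameter bound coming from the reduction rules, so the constant is not free; your approach has no analogous structural control on $G$ before the simulation begins.
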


\subsection{Basic Notation and Preliminaries} 
We consider simple undirected graphs. We use standard graph theoretic terminology from Diestel \cite{Diestel}. The \emph{open neighborhood} of a vertex $v$ of $G$ is the set of neighbors of $v$ and is denoted $N(v)$. The \emph{closed neighborhood} of $v$, denoted $N[v]$, is defined $N[v] = N(v) \cup \{v\}$.


\section{Proof of Theorem \ref{thm:main}}


Consider a graph $G = (V, E)$ with 2-coc cover $U$. This means that each connected component of $G[V\setminus U]$ is of size at most 2.  We first apply  the following reduction rules on graph $G$. These rules involve placing cops at certain vertices. These cops will limit the potential vertices to which the robber can move to. Note that these are the same rules from \cite{gahlawat}. We use \robber{} to denote the robber, and \cop{i} to denote the $i$-th cop.

\begin{reduction}[RR1]\label{rr:rr1}
    If there is a vertex $v \not\in U$ such that $|N(v) \cap U| \geq 3$, then place a cop at $v$ and delete $N[v]$.
\end{reduction}

\begin{center}
\begin{tikzpicture}
    \node[draw,circle,fill=blue] (A) at (1,2) {};
    \node[draw,circle,fill=blue] (B) at (3,2) {};
    \node[draw,circle,fill=blue] (C) at (5,2) {};
    \node[draw,circle,fill=red,label=below: Cop] (D) at (3,0) {};
    \node[label=below: $U$] (E) at (6.1,2.4) {};
    \draw (B) -- (D);
    \draw (C) -- (D);
    \draw (A) -- (D);
    \draw[dashed] (0,1.5) .. controls (2,1) and (4,1) .. (6,1.5);
\end{tikzpicture}
\end{center}

\begin{reduction}[RR2]\label{rr:rr2}
    If there is a vertex $v \in U$ such that $\size{N(v) \cap U} \geq 2$, then place a cop at $v$ and delete $N[v]$.
\end{reduction}

\begin{center}
\begin{tikzpicture}
    \node[draw,circle,fill=blue] (A) at (1,2) {};
    \node[draw,circle,fill=blue] (C) at (5,2) {};
    \node[draw,circle,fill=red,label=above: Cop] (D) at (3,3) {};
    \node[label=below: $U$] (E) at (6.1,2.4) {};
    \draw (C) -- (D);
    \draw (A) -- (D);
    \draw[dashed] (0,1.5) .. controls (2,1) and (4,1) .. (6,1.5);
\end{tikzpicture}
\end{center}

\begin{reduction}[RR3]\label{rr:rr3} \cite{aignerfromme}
    If there exists an isometric (shortest) path with at least 3 vertices in $U$ then use one cop to protect the path and delete it.
\end{reduction}

\begin{center}
\begin{tikzpicture}
    \node[draw,circle,fill=blue] (A) at (1,2) {};
    \node[draw,circle,fill=blue] (B) at (3,2) {};
    \node[draw,circle,fill=blue] (C) at (6,2) {};
    \node[draw,circle,fill=red] (X) at (2,0) {};
    \node[draw,circle,fill=red] (Y) at (5,0) {};
    \node[draw,circle,fill=red] (Z) at (7,0) {};
    \node[label=below: $U$] (E) at (8.1,2.4) {};
    \draw (A) -- (X);
    \draw (X) -- (B);
    \draw[dotted] (B) -- (Y);
    \draw (Y) -- (C);
    \draw (C) -- (Z);
    \draw[dashed] (0,1.5) .. controls (2.67,1) and (5.33,1) .. (8,1.5);
\end{tikzpicture}
\end{center}

Note that when we say a vertex is deleted during a reduction, it is not actually deleted, but is guarded by a cop, and therefore not accessible to \robber{}. For the cops the vertex is accessible and the graph remains connected.
Suppose after exhaustive application of the above reductions, the graph $G$ is reduced to $G'$ (from the perspective of the robber). Each reduction removes at least 3 vertices of $U$, and say $r$ reductions were applied in total.
Defining $U' = G' \cap U$, we now have

\begin{equation} \label{eq:bound1}
    |U'| \leq |U| - 3r.
\end{equation}

Since the $r$ cops we added protect every vertex in $G \setminus G'$, the robber is constrained to $G'$. In particular, the robber is constrained to a connected component of $G'$. 
So we need to find the maximum cop number of any connected component of $G'$. We have the following.

\begin{equation} \label{eq:bound2}
    c(G) \leq r + \max_{H \text{ is a connected component of } G'} c(H)\;.
\end{equation}

For any connected component $H$ of $G'$, define $U'_H = U' \cap H$. The following lemma now follows easily from \lref[RR]{rr:rr3}.

\begin{lemma}\label{ll:l1}
    Any two vertices in $U'_H$ have distance at most 3 in $H$. Any shortest path between the two vertices will not contain any vertices within $U$.
\end{lemma}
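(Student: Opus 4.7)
The plan is as follows. Fix any two vertices $u, v \in U'_H$ and a shortest $u$-$v$ path $P = u, x_1, x_2, \ldots, x_k, v$ in $H$, so that $P$ has $k$ internal vertices and length $k+1$. The two assertions of the lemma then correspond to (i)~no internal vertex $x_i$ lies in $U$, and (ii)~$k \le 2$.

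For (i), I would appeal directly to the exhaustion of \lref[RR]{rr:rr3}. Since $H$ is a connected component of $G'$, the path $P$ is also an isometric path in $G'$. If some internal $x_i$ were in $U$, then $P$ would contain three vertices of $U$, namely $u$, $x_i$, and $v$, so $P$ would itself be a valid target for \lref[RR]{rr:rr3}---contradicting the fact that the reductions have been applied exhaustively.

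Given (i), every $x_i$ lies in $V\setminus U$. For (ii), suppose for contradiction that $k \ge 3$. Then $x_1, x_2, x_3$ are three distinct vertices of $V\setminus U$, joined by the edges $x_1x_2$ and $x_2x_3$ inherited from $P$. These edges survive in the induced subgraph $G[V\setminus U]$, so $x_1, x_2, x_3$ all lie in a single connected component of $G[V\setminus U]$. But then that component has size at least $3$, contradicting the assumption that $U$ is a 2-coc cover. Hence $k \le 2$, giving distance at most $3$ in $H$.

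The only real difficulty here is bookkeeping rather than insight: one must confirm that ``isometric path'' in \lref[RR]{rr:rr3} is read with respect to the current reduced graph $G'$ (so that (i) applies), and that any shortest path inside $H$ is automatically shortest inside $G'$. Both are immediate because $H$ is a full connected component of $G'$. Once these points are pinned down, the proof reduces to the two short arguments sketched above.
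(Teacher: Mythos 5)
Your proposal is correct and follows essentially the same two-step argument as the paper: the exhaustion of \lref[RR]{rr:rr3} rules out internal vertices in $U$, and the 2-coc property then caps the number of internal vertices (all now in $V\setminus U$) at two, giving distance at most $3$. Your version just spells out the bookkeeping (that shortest paths in $H$ are isometric in $G'$) a bit more explicitly than the paper does.
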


\begin{proof}
    If a shortest path were to contain a vertex in $U$ apart from the two end points, then \lref[RR]{rr:rr3} would have applied, but we have exhaustively applied all the reduction rules.
    A shortest path cannot have length more than $3$ because the size of components outside $U$ is at most $2$.

    Therefore, any two vertices in $U'_H$ have distance at most 3 in $H$, and this shortest path will not contain any vertices within $U$.
\end{proof}

\begin{lemma}\label{ll:l2}
    Let $H$ be a connected compoenent of $G'$. In the subgraph $H$, three cops are sufficient to (after a finite number of turns) force \robber{} to move in a path that does not use an edge with both vertices outside $U'_H$.
\end{lemma}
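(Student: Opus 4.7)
The plan is to use the three cops to enforce the following \emph{blocking invariant}: after each cops' move, if the robber is at a pair vertex $a_i$ (with partner $b_i$ in the component $\{a_i, b_i\}$ of $H \setminus U'_H$), then one of the cops occupies $b_i$. Under this invariant, if the robber tries to use the internal pair edge $a_i b_i$, she walks onto the cop and is captured; hence every edge she uses thereafter must have at least one endpoint in $U'_H$, which is exactly the conclusion of the lemma.

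Initially, place the three cops at arbitrary vertices of $U'_H$. By \lref[Lemma]{ll:l1}, any two vertices of $U'_H$ lie within distance at most three in $H$ and their shortest paths avoid other vertices of $U'_H$, so the cops can navigate to any $U'_H$-vertex in at most three turns. In an initial approach phase, the cops cluster near the robber; in particular, if she is in a pair $P_i$, a cop reaches $b_i$ within a bounded number of moves, establishing the blocking invariant.

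To maintain the invariant, I would proceed by induction on turns with a case analysis on the robber's move. If she stays in place, the invariant holds trivially; if she attempts the internal pair edge, she is captured. Otherwise she moves to some $u \in N(a_i) \cap U'_H$ and the cops reposition using the bounded $U'_H$-diameter. The delicate case is when the robber is at $u \in U'_H$ and moves into a new pair $P_j$: the cops must already be poised so that one of them can reach $b_j$ on their next turn, even though they do not know $j$ in advance. Here I would exploit the structure imposed by the reduction rules -- each pair vertex has at most two $U'_H$-neighbors by \lref[RR]{rr:rr1}, and each $U'_H$-vertex has at most one $U'_H$-neighbor by \lref[RR]{rr:rr2} -- combined with the distance-at-most-three bound from \lref[Lemma]{ll:l1}, to show that three cops can always cover the necessary approach vertices. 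The main obstacle I expect is when $u$ has many adjacent pairs simultaneously, so the three cops cannot preemptively cover every possible $b_j$; resolving this will rely on the fact that the cops only need to react \emph{after} the robber commits to her move, together with the bound of at most two $U'_H$-approach vertices to each $b_j$, and will require a careful case analysis that may refine the invariant to also track the cops' positions relative to the robber's immediate $U'_H$-neighborhood.
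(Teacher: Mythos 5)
There is a genuine gap, and it is exactly the one you flag at the end: the blocking invariant you choose (a cop physically occupying the partner vertex $b_j$ whenever the robber stands on $a_j$) cannot in general be established in time, and you do not supply the case analysis that would repair it. Count the moves: the robber sits at $u \in U'_H$, the cops move, the robber steps to $a_j$, the cops get exactly one more move, and then the robber may traverse $a_j b_j$. So a cop must already be within distance $1$ of $b_j$ at the moment the robber commits to $a_j$. The vertex $u$ may be adjacent to arbitrarily many pair-vertices $a_1,\dots,a_k$ whose partners $b_1,\dots,b_k$ are distinct, each $b_j$ having its own (at most two) $U'_H$-neighbors, and these closed neighborhoods can be pairwise disjoint. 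Three cops cannot pre-position themselves within distance $1$ of every candidate $b_j$, and ``reacting after the robber commits'' buys only the single cop move computed above. So the invariant, as stated, is unachievable, and the lemma is not proved.

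The paper's proof avoids this by never trying to occupy $y$ ($= b_j$) at all. Instead it uses a threat argument: one cop, \cop{1}, shadows the robber's path and keeps advancing whenever \robber{} stalls or backtracks (using the diameter bound $\le 7$ from \lref[Lemma]{ll:l1} to cap how often that can happen), which removes backtracking to $r$ as an escape; the other two cops, upon seeing the robber step from $a$ to $r$, head along shortest paths toward the two $U'_H$-neighbors $d$ and $e$ of $y$. Since those paths have length at most $3$ by \lref[Lemma]{ll:l1} while the robber needs two further moves to reach $d$ or $e$ through $y$, every exit from $y$ is covered by the time the robber could use it. Hence moving to $y$ is a losing move and the robber never takes it — the forbidden edge is deterred rather than physically blocked, which is why two moves of reaction time (rather than your one) are available. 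To salvage your write-up you would need to weaken your invariant in essentially this direction: guard the $U'_H$-exits of $b_j$ and forbid backtracking, rather than guard $b_j$ itself.
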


\begin{proof}
    We give the strategy for the three cops that forces the robber (referred to as \robber{}) into a path as described in the statement.
    Name the cops \cop{1}, \cop{2}, and \cop{3}. All the three cops start on arbitrary vertices in $U'_H$. We define \emph{backtrack} as a move in the reverse direction along the same edge used in the previous turn.

\iftrue
    As a consequence of Lemma \ref{ll:l1}, it follows that $H$ has diameter at most 7. This happens when we have a shortest path $v_0, v_1, \ldots, v_7$ in $H$ where $v_2$ and $v_5$ are the only vertices in $U'_H$.
    
    \cop{1} initially moves to the starting point of \robber{} (which is at most 7 vertices away), and then follows the path traced by \robber{}, except that \cop{1} continues to move forward along the path even when the robber stays put or backtracks.
    This implies that the robber can backtrack or stay put at most 7 times throughout the whole game, otherwise \cop{1} will catch up to him.
    
    The cops \cop{2} and \cop{3} stay put when \robber{} stays put, and backtrack when \robber{} backtracks (we define the rest of \cop{2} and \cop{3}'s strategy later on). In effect, the relative positions of \cop{2} and \cop{3} with respect to \robber{} do not change when \robber{} backtracks/stays put, while \cop{1} gets closer.
    This means that backtracking is always disadvantageous to \robber{}.
    Hence we now consider the case that \robber{} neither backtracks nor stays put.
\fi

    As \cop{1} chases \robber{}, consider a time that the robber moves out of $U'_H$. Until this point, \cop{2} and \cop{3} do not move. We will now explain how \cop{2} and \cop{3} respond to the robber.

\begin{figure}[h]
\begin{center}
\begin{tikzpicture}
    \node[draw,circle,fill=gray, label=right: $a$] (A) at (1,2) {};
    \node[draw,circle,fill=gray, label=right: $b$] (B) at (3,2) {};
    \node[draw,circle,fill=cyan, label=right: $d$] (D) at (5,2) {};
    \node[draw,circle,fill=blue, label=right: $e$] (E) at (7,2) {};
    \node[draw,circle,fill=blue, label=right: \cop{3}] (F) at (10,2) {};
    \node[draw,circle,fill=cyan, label=right: \cop{2}] (H) at (12,2) {};
    \node[draw,circle,fill=red, label=below right: $r$ (\robber{})] (X) at (2,0) {};
    \node[draw,circle,fill=gray, label=below: $y$] (Y) at (6,0) {};
    \node[draw,circle,fill=gray] (Z) at (8,0) {};
    \node[draw,circle,fill=gray] (W) at (9,0) {};
    \node[draw,circle,fill=gray] (P) at (7,-0.5) {};
    \node[draw,circle,fill=gray] (Q) at (10,-0.5) {};
    \node[label=below: $U'_H$] (G) at (14,2.5) {};
    \draw (A) -- (X);
    \draw (X) -- (B);
    \draw (X) -- (Y);
    \draw (Y) -- (D);
    \draw (Y) -- (E);
    \draw (W) -- (Z);
    \draw (W) -- (F);
    \draw (Z) -- (E);
    \draw (D) -- (P);
    \draw (P) -- (Q);
    \draw (Q) -- (H);
    \draw[dashed] (0,1.5) .. controls (4.67,1) and (9.33,1) .. (14,1.5);
\end{tikzpicture}

\caption{Proof of Lemma \ref{ll:l2}. The vertex $r$ is the first instance when \robber{} moves outside of $U'_H$. Assuming that $r$ is in a component of size 2 in $G'[V \setminus U]$, we let $y$ be the neighbor of $r$ outside $U'_H$. The vertices $a, b$ are the two neighbors of $r$ in $U'_H$. The vertices $d, e$ are the two neighbors of $y$ in $U'_H$.}
\label{fig:lem}
\end{center}
\end{figure}
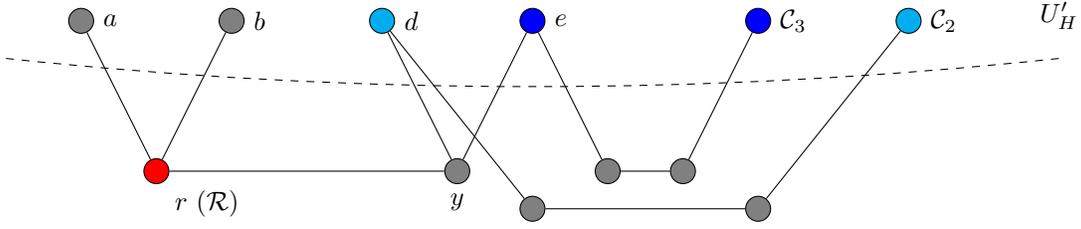

    Suppose the robber moves from vertex $a$ in $U'_H$ to a vertex $r$ outside $U'_H$, as shown in Figure \ref{fig:lem}. At this point, \cop{2} and \cop{3} are at some vertices in $U'_H$. The figure shows the ``largest case'', where $r$ is part of a component of size 2 outside $U'_H$ (call the other vertex $y$), and $r$ and $y$ have 2 neighbours each in $U'_H$. 
    All the other cases can be addressed by considering a subset of this case.
    Therefore the algorithm can be applied to these smaller cases where $r$ or $y$ has less than $2$ neighbours within $U'_H$.
    The vertex $b$ is the other neighbor of $r$ (apart from $a$) in $U'_H$. The vertices $d, e$ are the two neighbors of $y$ in $U'_H$.

    After \robber{} moves to $r$, \cop{2} moves along its shortest path towards $d$ and \cop{3} similarly moves towards $e$. By \lref[Lemma]{ll:l1}, we see that \robber{} cannot reach $d$ or $e$ without getting caught.
    
    This means \robber{} cannot move to $y$ on his next turn, since he cannot then backtrack to $r$ due to the presence of \cop{1}.
    Similarly, \robber{} cannot backtrack to $a$ either. This means that on the next move, \robber{} must move to $b$, which is in $U'_H$.
    Once \robber{} moves to $b$, \cop{2} and \cop{3} move back to initial previous positions, and the above argument repeats, ensuring that the robber never uses an edge completely out of $U'_H$.
\end{proof}

Now \robber{} is no longer allowed to use any edge that has both its end points in $H \setminus U'_H$. 
Let $\widehat{H}$  be the subgraph of $H$ after removing such edges.
This means that every edge of $\widehat{H}$ has at least one endpoint in $U'_H$, meaning $U'_H$ is a vertex cover of $\widehat{H}$. Using the bound that relates the cop number  to the vertex cover number from \cite{gahlawat}, we get that
\begin{equation}
c(\widehat{H}) \leq \frac{\mbox{vcn}({\widehat{H}})}{3} + 1 \leq \frac{\size{U'_H}}{3} + 1 \leq \frac{\size{U'}}{3} + 1 \;.
\end{equation}

Applying \lref[Lemma]{ll:l2}, we have
\begin{equation} \label{eq:bound3}
c(H) \leq 3 + c(\widehat{H}) \leq \frac{\size{U'}}{3} + 4\;.
\end{equation}

Combining \eqref{eq:bound1}, \eqref{eq:bound2}, and \eqref{eq:bound3}, we have
\begin{align}
    c(G) &\leq \frac{\size{U}}{3} + 4\,.
\end{align}

\section{Open Questions}
A clear direction is to find bounds that relate the cop number to the size of the smallest feedback vertex set.

Vertex cover number is 1-coc, and our result relates the cop number to 2-coc. A natural approach is to bound the cop number by $k$-coc for a constant $k$. 

Could we get bounds of the following form?
\begin{align*}
    c(G)  = \frac{|U|}{3} + f(k) \;.
\end{align*}
The function $f(k)$ could have an arbitrary dependence on $k$. 
It is also possible that the 3 in the denominator could be replaced by a smaller constant, say $2 + \epsilon$, for a constant $\epsilon >0$.


\section{Acknowledgments}

The second author wishes to acknowledge SERB-DST for supporting this work via the grant CRG/2022/009400.

\nocite{Diestel}

\bibliographystyle{alpha}

\bibliography{bibfile}
    
\end{document}